\newtheorem{theorem}{Theorem}
\newtheorem{lemma}[theorem]{Lemma}
\newtheorem{corollary}[theorem]{Corollary}
\newtheorem{proposition}[theorem]{Proposition}
\def\P{\mathscr{P}}
\def\C{\mathcal{C}}
\begin{document}

\begin{center}
  \Large\bf{All-path convexity: Combinatorial and complexity aspects}
\end{center}
\smallskip
\begin{center}
  F\'abio Protti\footnote{Partially supported by CNPq and FAPERJ, Brazilian Research Agencies}\\
  Instituto de Computação - Universidade Federal Fluminense -  Brazil\\
  E-mail: {\tt fabio@ic.uff.br}\\
  \hspace*{1cm}\\
  Jo\~ao V. C. Thompson\\
  Centro Federal de Educação Tecnológica Celso Suckow da Fonseca\\
  CEFET/RJ  - Campus Petrópolis -  Brazil\\
  E-mail: {\tt joao.thompson@cefet-rj.br}
\end{center}

\smallskip

\noindent {\bf Abstract.} Let $\P$ be any collection of paths of a graph $G=(V,E)$. For $S\subseteq V$, define $I(S)=S\cup\{v\mid v \ \mbox{lies in a path of} \ \P \ \mbox{with endpoints in} \ S\}$. Let $\C$ be the collection of fixed points of the function $I$, that is, $\C=\{S\subseteq V\mid I(S)=S\}$. It is well known that $(V,\C)$ is a finite convexity space, where the members of $\C$ are precisely the convex sets. If $\P$ is taken as the collection of all the paths of $G$, then $(V,\C)$ is the {\em all-path convexity} with respect to graph $G$. In this work we study how important parameters and problems in graph convexity are solved for the all-path convexity.

\smallskip

\noindent {\bf Keywords:} all-path convexity, graph convexity, path convexity

\section{Introduction}\label{sec:intro}

Let $G=(V,E)$ be a simple, finite, nonempty, and connected graph, and let $\C$ be a collection of subsets of $V$. We say that $(V,\C)$ is a (finite) {\em graph convexity space} if: (a) $\emptyset\in\C$; (b) $V\in\C$; (c) $\C$ is closed under intersections. In many studies, the collection $\C$ is determined as follows. Let $\P$ be any collection of paths of a graph $G$, and, for $S\subseteq V$, define
$$I(S)=S\cup\{v\mid v \ \mbox{lies in a path of} \ \P \ \mbox{with endpoints in} \ S\}.$$ Define $\C\subseteq 2^V$ as the collection of fixed points of the function $I$, that is, $\C=\{S\subseteq V\mid I(S)=S\}$. Then $(V,\C)$ is easily seen to be a graph convexity space, generally called a {\em ``path convexity''}. In particular, if $\P$ contains precisely all the shortest paths of $G$ then the corresponding convexity space is the well-known {\em geodesic convexity} with respect to $G$ \cite{P113,HN81,P148}; if $\P$ is the collection of induced paths of $G$ then the corresponding convexity space is the {\em monophonic convexity} with respect to $G$ \cite{D10,D88}; and there are in the literature many other examples described below, where in each case we indicate which collection $\P$ of paths of $G$ is considered:
\begin{itemize}
\setlength\itemsep{0em}
\item[--] $g^3$-convexity \cite{P150}: shortest paths of length at least three;
\item[--] $m^3$-convexity \cite{P28,P88}: induced paths of length at least three;
\item[--] $g_k$-convexity \cite{P101}: shortest paths of length at most $k$;
\item[--] $P_3$-convexity \cite{P37,P85,P153}: paths of length two;
\item[--] $P_3^*$-convexity \cite{A13}: induced paths of length two;
\item[--] triangle-path convexity \cite{P40,P45}: paths allowing only triangular chords;
\item[--] total convexity \cite{P76}: paths allowing only non-triangular chords;
\item[--] detour convexity \cite{P46,P70,P68}: longest paths.
\end{itemize}

In this work, we study the {\em all-path convexity}, which is associated with the collection $\P$ of all the paths of $G$ \cite{P42,P107,P165}. Our study is concentrated in solving the most important problems in graph convexity for the specific case of the all-path convexity, including the determination of some well-known graph convexity parameters, such as the {\em convexity number}, the {\em interval number}, and the {\em hull number} of $G$. The remainder of this work is organized as follows. In Section 2 we provide some necessary background. In Section 3 we present the main results of this work. Finally, Section 4 contains our concluding remarks.

\section{Preliminaries}

In this work, $G=(V,E)$ is always a finite, simple, nonempty, and connected graph with $n$ vertices and $m$ edges. Let $\P$ be a collection of paths of $G$, and let $I_{\P}:2^{V}\rightarrow 2^{V}$ be a function ({\em interval function}) associated with $\P$ in the following way:
$$I_{\P}(S) = S\cup\{v\mid v \ \mbox{lies in a path} \ P\in\P \ \mbox{with endpoints in} \ S\}.$$



Define $\C_{\P}$ as the family of subsets of $V$ such that $S\in\C_{\P}$ if and only if $I_{\P}(S)=S$. Then it is easy to see that $(V,\C_{\P})$ is a finite convexity space, whose convex sets are precisely the fixed points of $I_{\P}$.

\begin{proposition}\label{prop:convexity}{\em \cite{V93}}
$(V,\C_{\P})$ is a finite convexity space.
\end{proposition}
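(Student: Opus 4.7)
The plan is to verify, in order, the three defining axioms of a finite convexity space for the pair $(V, \C_{\P})$, namely that $\emptyset \in \C_{\P}$, that $V \in \C_{\P}$, and that $\C_{\P}$ is closed under arbitrary intersections. Since $\C_{\P}$ is defined as the fixed point set of $I_{\P}$, each axiom reduces to a short computation with $I_{\P}$.

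First I would handle the trivial memberships. For $\emptyset$, note that the second set in the definition of $I_{\P}(\emptyset)$ is empty, because no path can have both endpoints in $\emptyset$; hence $I_{\P}(\emptyset) = \emptyset$. For $V$, the inclusion $V \subseteq I_{\P}(V)$ is immediate from the definition, while $I_{\P}(V) \subseteq V$ holds because $I_{\P}$ only produces vertices of $G$. Thus $\emptyset, V \in \C_{\P}$.

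The core of the argument is closure under intersections. Given a family $\{S_i\}_{i \in J} \subseteq \C_{\P}$, I would show $I_{\P}\bigl(\bigcap_{i \in J} S_i\bigr) = \bigcap_{i \in J} S_i$. The inclusion $\supseteq$ is immediate. For $\subseteq$, pick $v \in I_{\P}\bigl(\bigcap_{i} S_i\bigr)$; either $v$ already lies in the intersection, or $v$ lies on some path $P \in \P$ whose endpoints belong to $\bigcap_i S_i \subseteq S_i$ for every $i$. In the latter case, $v \in I_{\P}(S_i) = S_i$ for each $i$ (using that each $S_i$ is a fixed point), so $v \in \bigcap_i S_i$.

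No step is a real obstacle: the argument is essentially bookkeeping with the definition of $I_{\P}$, and the monotonicity of $I_{\P}$ (if $A \subseteq B$ then $I_{\P}(A) \subseteq I_{\P}(B)$) is the only implicit ingredient, itself a direct consequence of the definition. The only mild subtlety worth mentioning is that finiteness of the convexity space is automatic, since $V$ is finite and $\C_{\P} \subseteq 2^{V}$; thus no separate argument is needed beyond the three axioms above.
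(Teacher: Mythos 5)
Your verification is correct and complete: checking $\emptyset$ and $V$ as fixed points of $I_{\P}$ and using that endpoints in $\bigcap_i S_i$ lie in every $S_i$ (each a fixed point) is exactly the routine axiom check the paper has in mind when it states the fact is ``easy to see'' and defers to the cited reference, offering no proof of its own. Nothing is missing; the finiteness remark is also handled appropriately.
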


If $\P$ is the collection of all the paths of $G$, then $(V,\C_{\P})$ is the {\em all-path convexity} with respect to $G$. In order to ease the notation, we omit the subscript ${\P}$ if it is clear from the context.

In this work we focus on seven important problems that are usually studied in the field of convexity in graphs, and solve them for the case of the all-path convexity. We need some additional definitions. Let $S\subseteq V$. If $I(S) = V$ then $S$ is an {\em interval set}. The {\em convex hull} $H(S)$ of $S$ is the smallest convex set containing $S$. Write $I^0(S)=S$ and define $I^{i+1}(S)=I(I^i(S))$ for $i\geq 0$. Note that $I(S)=I^1(S)$ and there is an index $i$ for which $H(S)=I^i(S)$. If $H(S) = V$ then $S$ is a {\em hull set}. The {\em convexity number} $c(G)$ of $G$ is the size of a maximum convex set $S\neq V$. The {\em interval number} $i(G)$ of $G$ is the size of a smallest interval set of $G$. The {\em hull number} $h(G)$ of $G$ is the size of a smallest hull set of $G$. Clearly, $h(G)\leq i(G)$. The {\em geodetic iteration number} of a graph $G$, denoted by $gin(G)$, is the smallest $i$ such that $H(S)=I^i(S)$ for every $S\subseteq V(G)$. Equivalently, $gin(G)=\min\{ i\mid I^i(S)=I^{i+1}(S)$ for every $S\subseteq V(G)\}$. Graphs with geodetic iteration number at most $1$ are called {\em interval monotone graphs}

Now we are in position to state the problems dealt with in this work:

\begin{itemize}
\setlength\itemsep{0em}
\item[--] {\sc Convex Set}\\
Input: A graph $G$ and a set $S\subseteq V$.\\
Question: Is $S$ convex?

\item[--]  {\sc Interval Determination}\\
Input: A graph $G$ and a set $S\subseteq V$.\\
Goal: Determine $I(S)$.

\item[--]  {\sc Convex Hull Determination}\\
Input: A graph $G$.\\
Goal: Determine $H(S)$.



\item[--]  {\sc Convexity Number}\\
Input: A graph $G$.\\
Goal: Determine $c(G)$.

\item[--]  {\sc Interval Number}\\
Input: A graph $G$.\\
Goal: Determine $i(G)$.

\item[--]  {\sc Hull Number}\\
Input: A graph $G$.\\
Goal: Determine $h(G)$.

\item[--]  {\sc Geodetic Iteration Number}\\
Input: A graph $G$.\\
Goal: Determine $gin(G)$.

\end{itemize}

\section{Main results}

In this section we present the solution of the problems stated in the preceding section for the case of the all-path convexity. For $X,Y\subseteq V$, denote $N(X,Y)=\{v\in Y\mid v \ \mbox{is a neighbor of some} \ x\in X\}$.

\begin{theorem}\label{thm:convex}
Let $S\subseteq V$. Then $S$ is convex in the all-path convexity if and only if either $S=V$ or for every connected component $G_i$ of $G-S$ it holds that $$|N(V(G_i),S)|=1.$$
\end{theorem}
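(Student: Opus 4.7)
The plan is to prove both directions by direct construction using simple paths, relying crucially on the fact that all-path convexity uses every simple path of $G$.

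For the forward direction, I would prove the contrapositive. Assume $S \neq V$ and some component $G_i$ of $G-S$ has $|N(V(G_i),S)| \geq 2$ (note that since $G$ is connected, $N(V(G_i),S)$ cannot be empty). Then there exist two distinct vertices $u,v \in S$ with neighbors $u',v' \in V(G_i)$ respectively. Because $G_i$ is connected, there is a simple path $P'$ from $u'$ to $v'$ lying entirely in $V(G_i)$. Prepending $u$ and appending $v$ yields a walk $u,u',P',v',v$ whose internal vertices are all in $V(G_i) \subseteq V \setminus S$; in particular, $u$ and $v$ do not repeat along $P'$, so this walk is a simple path in $G$. Every internal vertex then witnesses membership in $I(S) \setminus S$, contradicting $I(S) = S$.

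For the backward direction, assume $S \neq V$ and $|N(V(G_i),S)|=1$ for every component $G_i$. Suppose for contradiction that some $w \in I(S) \setminus S$; then $w$ lies on a simple path $P = x_0 x_1 \cdots x_k$ with $x_0,x_k \in S$. Let $G_i$ be the component of $G-S$ containing $w$, and let $\{z\} = N(V(G_i),S)$. The crucial observation is: if an edge of $G$ joins a vertex in $V(G_i)$ to a vertex outside $V(G_i)$, then its outside endpoint must be in $S$ (otherwise it would lie in the same component of $G-S$ as its neighbor, namely $G_i$), and hence must equal $z$. Now choose the smallest index $j$ with $x_j \in V(G_i)$ and the largest index $j'$ with $x_{j'} \in V(G_i)$; since $x_0,x_k \in S$, we have $1 \leq j \leq j' \leq k-1$. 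Applying the observation to the edges $x_{j-1}x_j$ and $x_{j'}x_{j'+1}$ forces $x_{j-1}=z=x_{j'+1}$, and because $j-1 < j \leq j' < j'+1$ these are two distinct positions on $P$. This contradicts the simplicity of $P$.

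The two directions share a common technical ingredient, namely the construction or analysis of a simple $S$-$S$ path through an external component. The forward direction is essentially straightforward once one notices that one can lift a path in $G_i$ to a simple path in $G$. The backward direction is the more delicate part: the main subtlety is the lemma-like observation that all edges leaving $V(G_i)$ toward $V \setminus V(G_i)$ must use $z$ on the outside, which reduces the problem to showing that any simple path that visits $G_i$ and returns must reuse $z$. Once this reduction is made, the contradiction with simplicity of $P$ is immediate. No additional machinery beyond the definitions already stated in the excerpt is required.
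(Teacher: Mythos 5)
Your proof is correct and takes essentially the same approach as the paper: the forward direction is the identical contrapositive construction lifting a path of $G_i$ to an $S$--$S$ path through $G_i$. Your backward direction is only a cosmetic variant of the paper's boundary-crossing argument: the paper lets the path's first exit from $S$ and first return to $S$ exhibit two distinct vertices of $N(V(G_i),S)$, contradicting the hypothesis, while you use the singleton $\{z\}=N(V(G_i),S)$ at the first entry and last exit of $G_i$ and contradict the simplicity of the path; these are the same argument phrased contrapositively.
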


\begin{proof}
Suppose that $S$ is convex and $S\neq V$, and let $G_i$ be a connected component of $G-S$. If $|N(V(G_i),S)|\geq 2$ then there are distinct $v,w\in S$ with neighbors $v',w'\in V(G_i)$ (not necessarily distinct). Let $P'=v'\ldots w'$ be a path from $v'$ to $w'$ in $G_i$ (such a path exists because $G_i$ is connected). Occasionally, $P'$ consists of a single vertex $v'=w'$. Now, let $P$ be the path $vP'w=vv'\ldots w'w$. This means that $v'$ (or $w'$) lies in a path with endpoints in $S$, that is, $v'\in I(S)$. Since $v'\notin S$, this implies $I(S)\neq S$, a contradiction to the assumption that $S$ is convex.

Conversely, suppose that $S\neq V$ and $|N(V(G_i),S)|=1$ for every connected component of $G-S$. Assume by contradiction that $S$ is not convex. Then, from the definition of convex set, there is a path $P=v_1v_2\ldots v_k$ with (distinct) endpoints in $S$ that contains vertices outside $S$. Let $j$ be the minimum index for which $v_j\in S$ and $v_{j+1}\not\in S$, and assume that $v_{j+1}\in V(G_i)$. Since the $G_i$'s are mutually isolated (i.e., there is no edge in $E$ joining a vertex in $G_i$ to a vertex in $G_k$ for $i\neq k$), let $l$ be the minimum index for which $l>j$ and $v_l\in S$ (in other words, after visiting $G_i$, the path $P$ must return to $S$ before visiting another component $G_k$). Thus $v_j,v_l\in N(V(G_i),S)$. But $j\neq l$, because $P$ is a path. Then $|N(V(G_i),S)|\geq 2$, a contradiction. Therefore, $S$ is convex.
\end{proof}

\begin{corollary}
{\sc Convex Set} can be solved in $O(n+m)$ time for the all-path convexity.
\end{corollary}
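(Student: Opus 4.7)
The plan is to turn the characterization in Theorem~\ref{thm:convex} directly into a linear-time algorithm. Given $G=(V,E)$ and $S\subseteq V$, first check whether $S=V$ in $O(n)$ time; if so, declare $S$ convex. Otherwise, compute the connected components of $G-S$ by a standard BFS or DFS restricted to $V\setminus S$, which runs in $O(n+m)$ time. Then, for each component $G_i$, decide whether $|N(V(G_i),S)|=1$; by Theorem~\ref{thm:convex}, $S$ is convex if and only if this holds for every component.

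The key step, and the only one that requires care to stay within $O(n+m)$, is computing $|N(V(G_i),S)|$, or really just deciding whether this number is $1$ or $\geq 2$, without re-scanning edges. I would fold this test into the same traversal that builds the components: while enumerating a component $G_i$, maintain a single ``witness'' variable $w_i$, initially undefined. Whenever a vertex $v\in V(G_i)$ is popped from the BFS queue, scan its adjacency list once; neighbors lying outside $S$ drive the BFS as usual, while for each neighbor $u\in S$, either set $w_i\leftarrow u$ if $w_i$ is undefined, or check $u=w_i$ and mark $G_i$ as bad (and stop testing further $S$-neighbors of this component) if not. Each edge is examined $O(1)$ times overall, so the total work is $O(n+m)$.

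Finally, $S$ is reported convex exactly when no component is marked bad and every component has at least one recorded $S$-neighbor (the latter is automatic since $G$ is connected and $S\neq\emptyset$; if $S=\emptyset$ the problem is trivial). Correctness is immediate from Theorem~\ref{thm:convex}. The only mild obstacle is the bookkeeping that keeps the per-edge work constant during the component exploration, but the witness-variable trick above handles it cleanly, so the whole procedure is $O(n+m)$.
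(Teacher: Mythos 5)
Your proposal is correct and follows the same route as the paper: apply Theorem~\ref{thm:convex}, compute the components of $G-S$ by a linear-time traversal, and verify that each component sees exactly one vertex of $S$. The paper states this more tersely, while you additionally spell out the witness-variable bookkeeping that keeps the per-edge work constant, which is a fair implementation detail but not a different argument.
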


\begin{proof}
If $S\neq V$, computing the connected components of $G-S$ can be easily done in $O(n+m)$ time. In addition, checking whether $|N(V(G_i),S)|=1$ for every connected component $G_i$ of $G-S$ can also be done in $O(n+m)$ time. Thus, the corollary follows.
\end{proof}

Now we deal with the {\sc Interval Determination} problem. Consider the block decomposition of $G$ represented by a block-cut tree $T_G$, where each vertex of $T_G$ is associated with either a block $B_j$ (a cut edge or a maximal 2-connected subgraph of $G$) or a cut vertex $z_i\in V$, and there is an edge linking a vertex $B_{j}$ and a vertex $z_i$ in $T_G$ whenever block $B_{j}$ contains the cut vertex $z_i\in V$. This definition implies that the vertices of $T_G$ associated with blocks of $G$ form an independent set, and the same occurs for the vertices of $T_G$ associated with cut vertices of $G$. In addition, every leaf of $T_G$ represent a block of $G$. An {\em end block} of $G$ is a block associated with a leaf of $T_G$.

For a set $S\subseteq V$, let $T_S$ be the maximal subtree of $T_G$ such that every leaf of $T_S$ is associated with a block of $G$ containing a vertex of $S$ that is not a cut vertex in the subgraph $G_S$ induced by $\cup_{B_j\in V(T_S)} B_j$. Figure 1 shows an example.

\begin{figure}[!ht]
\includegraphics[width=\textwidth]{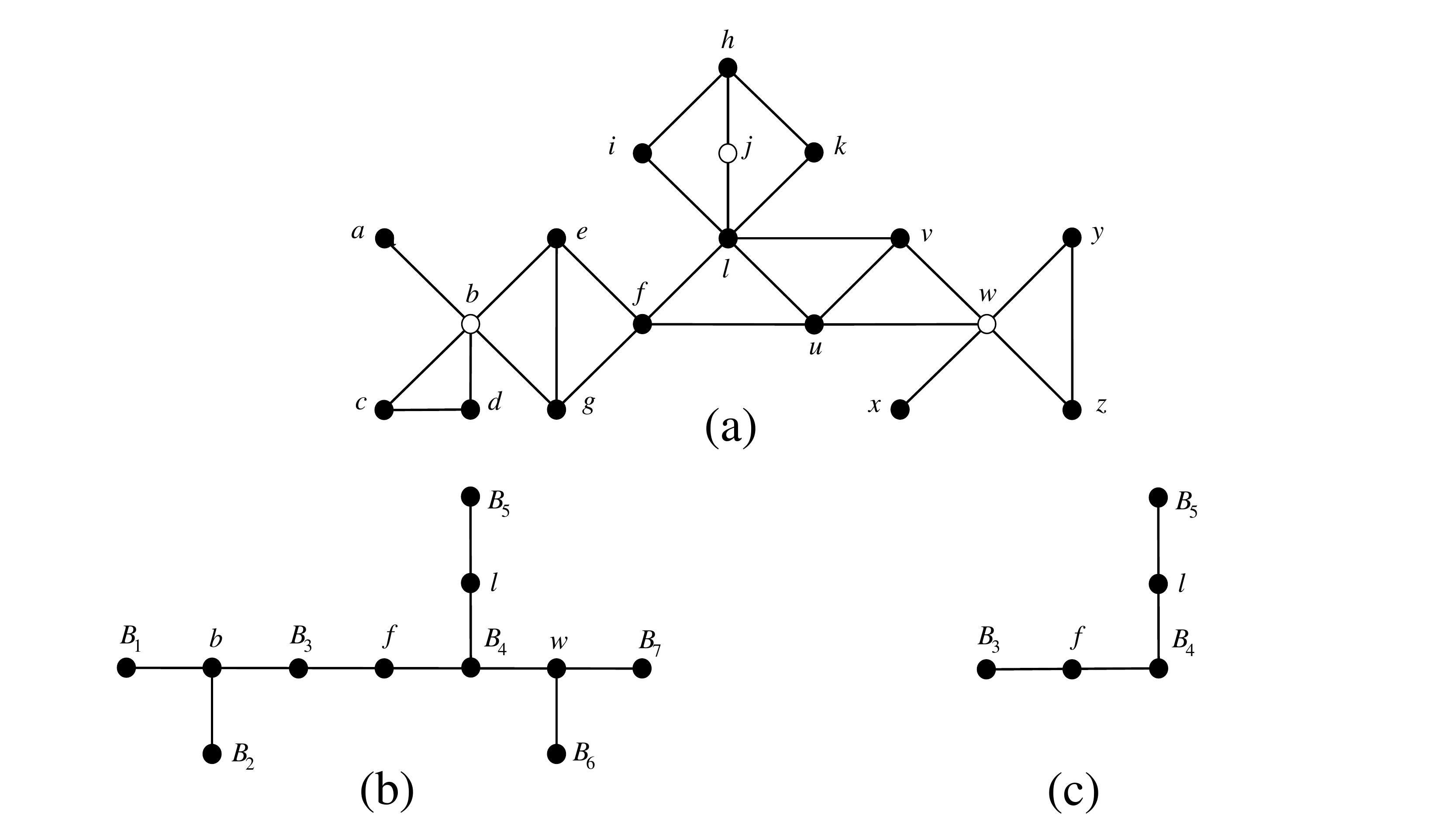}
\caption{(a) A graph $G$ and a subset $S=\{b,j,w\}$ (represented by the white vertices), whose blocks are such that $V(B_1)=\{a,b\}$, $V(B_2)=\{b,c,d\}$, $V(B_3)=\{b,e,g,f\}$, $V(B_4)=\{f,l,u,v,w\}$, $V(B_5)=\{h,i,j,k,l\}$, $V(B_6)=\{w,x\}$, $V(B_7)=\{w,y,z\}$; (b) block-cut tree $T_G$; (c) subtree $T_S$ of $T_G$. Block $B_3$ is a leaf of $T_S$ because it contains no cut vertex in the graph $G_S$ induced by $V(B_3)\cup V(B_4)\cup V(B_5)$.}
\end{figure}

\begin{lemma}\label{lem:path}
Let $S\subseteq V$, and let $u,w$ be two distinct vertices in $S$, belonging to blocks $B_u$ and $B_w$ of $T_S$, respectively. Assume that $u$ and $w$ are not cut vertices in $G_S$. Let $B_{j_1}z_1B_{j_2}z_2\ldots z_{k-1}B_{j_k}$ be a path in $T_S$ between $B_{j_1}=B_u$ and $B_{j_k}=B_w$. Then, for every $v\in\cup_{i=1}^{k} V(B_{j_i})$, there is a path $P$ in $G$ from $u$ to $w$ passing through $v$.
\end{lemma}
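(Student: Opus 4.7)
The plan is to build the desired $u$-$w$ path through $v$ block by block along the tree path $B_{j_1}z_1B_{j_2}\ldots z_{k-1}B_{j_k}$. For notational uniformity I set $z_0 := u$ and $z_k := w$, and I pick an index $t\in\{1,\ldots,k\}$ with $v\in V(B_{j_t})$ (if $v$ happens to be a cut vertex $z_i$, either of the two choices works). For each $i\in\{1,\ldots,k\}$ I seek a local path $Q_i$ lying entirely inside $B_{j_i}$ that goes from $z_{i-1}$ to $z_i$, with the extra requirement that $Q_t$ passes through $v$. The final path is the concatenation $P := Q_1 Q_2\cdots Q_k$.

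The construction of each $Q_i$ exploits the structure of a block. If $B_{j_i}$ is a cut edge, then it is isomorphic to $K_2$ and its two vertices are exactly $z_{i-1}$ and $z_i$, so $Q_i$ is that single edge; in the case $i=t$, the vertex $v\in V(B_{j_i})$ is forced to coincide with one of its endpoints, so the edge still suffices. If $B_{j_i}$ is a maximal $2$-connected subgraph, then for $i\ne t$ any path inside $B_{j_i}$ from $z_{i-1}$ to $z_i$ works. For $i=t$, I invoke the classical consequence of $2$-connectedness that any three vertices of a $2$-connected graph lie on a common $z_{t-1}$-$z_t$ path through $v$; this is a standard application of Menger's theorem (fan lemma): there exist two internally disjoint paths in $B_{j_t}$ from $v$ to $\{z_{t-1},z_t\}$, one ending at each, whose union traversed from $z_{t-1}$ to $z_t$ gives a path through $v$. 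I expect this $2$-connected lemma to be the main technical ingredient, even though it is entirely standard.

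It remains to verify that $P=Q_1Q_2\cdots Q_k$ is a simple path. Two distinct blocks of $G$ share at most one vertex, which must be a cut vertex of $G$, and they share a vertex only when they are adjacent in $T_G$ (and hence in $T_S$). Therefore consecutive blocks $B_{j_i},B_{j_{i+1}}$ in the list intersect exactly in $\{z_i\}$, while any two non-consecutive blocks in the list are vertex-disjoint. Since each $Q_i$ lives entirely in $V(B_{j_i})$, two consecutive sub-paths $Q_i,Q_{i+1}$ meet only at the common endpoint $z_i$, and non-consecutive sub-paths are vertex-disjoint. Hence $P$ is a simple $u$-$w$ path in $G$ containing $v$, completing the proof.
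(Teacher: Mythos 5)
Your proof is correct and follows essentially the same strategy as the paper: walk along the tree path, take a $z_{i-1}$--$z_i$ subpath inside each block, apply the Fan Lemma in the block containing $v$, and concatenate. Your version differs only in presentation (uniform notation $z_0=u$, $z_k=w$, the cut-edge vs.\ $2$-connected dichotomy instead of the paper's case split on whether $v$ is a cut vertex of $G_S$, and an explicit verification that the concatenation is simple, which the paper only sketches).
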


\begin{proof}
If $B_u=B_w$ and there is a vertex $v\in V(B_u)\setminus\{u,w\}$, by the Fan Lemma (Proposition 9.5 in~\cite{BM08}, applied to $2$-connected graphs) there is a path $P$ from $u$ to $w$ passing through $v$.

If $B_u\neq B_w$, construct from $B_{j_1}z_1B_{j_2}z_2\ldots z_{k-1}B_{j_k}$ a path $P$ in $G$ as follows. Let $v\in\cup_{i=1}^{k} V(B_{j_i})$ and suppose first that $v$ is not a cut vertex in $G_S$. Assume that $B_v=B_{j_s}$ for some $1<s<k$ (the cases $s=1$ or $s=k$ are similar).
\begin{itemize}
\setlength\itemsep{0em}
\item[--] Let $P_1$ be a path contained in $B_{j_1}$ starting at $u$ and ending at $z_1$.
\item[--] Let $P_s$ be a path contained in $B_{j_s}$ starting at $z_{s-1}$, passing through $v$, and ending at $z_s$. Since $v$ is not a cut vertex of $G_S$, the block $B_{j_s}$ contains at least three vertices. Thus, the Fan Lemma guarantees the existence of path $P_s$.
\item[--] Let $P_i$, $2\leq i\leq k-1$, $i\neq s$, be a path contained in $B_{j_i}$ starting at $z_{i-1}$ and ending at $z_i$.
\item[--] Finally, let $P_k$ be a path contained in $B_{j_k}$ starting at $z_{k-1}$ and ending at $w$.
\end{itemize}
Now, define $P$ as the concatenation $P_1P_2\ldots P_k$ (remove replicated vertices that appear consecutively). Then $P$ is a path from $u$ to $w$ passing through vertex $v$.

The case where $v$ is a cut vertex of $G_S$ is dealt with similarly. In this case, there is no need of applying the Fan Lemma to $B_{j_s}$. $P_s$ can be simply taken as a path from $z_{s-1}$ to $z_s$, since $v\in\{z_{s-1},z_s\}$.
\end{proof}

Let $S\subseteq V$ and $v\in V\setminus S$. Say that a vertex $z\neq v$ {\em separates} $v$ {\em and} $V'\subseteq V\setminus\{v\}$ if, in graph $G-z$, the connected component containing $v$ contains no vertex of $V'$.

\begin{theorem}\label{thm:i-s}
Let $S\subseteq V$ with $|S|\geq 2$. Then
$$I(S)= \bigcup_{B_j\in V(T_S)} B_j.$$
\end{theorem}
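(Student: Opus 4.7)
My plan is to prove both inclusions, using as a bridge the auxiliary fact $S\subseteq W$, where $W:=\bigcup_{B_j\in V(T_S)}V(B_j)$. This fact I would derive from the maximality of $T_S$: were some $s\in S$ outside $W$, I would graft a block $B_s$ of $G$ containing $s$ onto $T_S$ via the $T_G$-path connecting them; the new leaf $B_s$ carries $s$ as a non-cut witness because a block has no internal cut vertex of the enlarged induced subgraph, and a careful choice of $B_s$ ensures the old leaf witnesses of $T_S$ remain valid, contradicting maximality.

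For the inclusion $W\subseteq I(S)$, take $v\in V(B_v)$ for some $B_v\in V(T_S)$ and assume $v\notin S$. I locate distinct $u,w\in S$ lying in blocks of $T_S$ with non-cut $G_S$-witnesses such that $B_v$ is on the $T_S$-path between their blocks; Lemma~\ref{lem:path} then produces a $G$-path from $u$ to $w$ through $v$. If $B_v$ is internal in $T_S$, each component of $T_S-\{B_v\}$ contains at least one leaf-block of $T_S$ (a short tree argument exploiting that leaves of $T_S$ are blocks and that adjacent vertices in $T_G$ are of opposite types confirms this), so I pick a witness per side. If $B_v$ is a leaf of $T_S$, it supplies $u$ itself; another leaf supplies $w$, and when $T_S=\{B_v\}$ the auxiliary fact combined with $|S|\geq 2$ and the absence of internal cut vertices in a block yields two non-cut $S$-witnesses in $B_v$.

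For the inclusion $I(S)\subseteq W$, I check that $W$ is convex via Theorem~\ref{thm:convex}. Each connected component $G_i$ of $G-W$ corresponds bijectively to a connected component of $T_G-V(T_S)$, which in the tree $T_G$ is attached to $V(T_S)$ through a single edge; the corresponding cut vertex of $G$ is the only boundary vertex of $V(G_i)$ in $W$, giving $|N(V(G_i),W)|=1$. Combining this convexity with $S\subseteq W$ and the monotonicity of $I$, I obtain $I(S)\subseteq I(W)=W$.

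The most delicate step is the auxiliary fact $S\subseteq W$ when $s$ is a cut vertex of $G$: the $T_G$-path from $B_s\ni s$ to the nearest node of $T_S$ may have to traverse $s$ itself, and one must choose $B_s$ and the extension so that $s$ does not become a cut vertex of the enlarged induced subgraph separating some old leaf-witness from the rest of $T_S'$. A short case analysis on the blocks of $G$ incident to $s$ handles this.
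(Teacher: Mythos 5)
Your argument is correct, and it is worth separating where it coincides with the paper's proof and where it genuinely diverges. For the inclusion $\bigcup_{B_j\in V(T_S)}V(B_j)\subseteq I(S)$ you do essentially what the paper does: exhibit two leaf blocks of $T_S$ whose non-cut witnesses $u,w\in S$ have $B_v$ on the $T_S$-path between them, then invoke Lemma~\ref{lem:path}; the paper obtains both leaves at once by taking a maximal path of $T_S$ through $B_v$, which avoids your case split on $B_v$ internal versus leaf. For the reverse inclusion the routes differ: the paper takes $v\in I(S)$ outside the union and produces a single cut vertex $z\neq v$ separating $v$ from $S$, contradicting the existence of a path between two vertices of $S$ through $v$, whereas you verify via Theorem~\ref{thm:convex} that $W=\bigcup_{B_j\in V(T_S)}V(B_j)$ is convex and conclude $I(S)\subseteq I(W)=W$ by monotonicity of $I$. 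Your route costs a bit more (the boundary analysis of $G-W$ and the explicit appeal to monotonicity) but buys Lemma~\ref{lem:i-s-convex} for free, since convexity of $I(S)=W$ is exactly what that lemma asserts. Two small cautions: the claimed bijection between components of $G-W$ and components of $T_G-V(T_S)$ is not exact (several components of $G-W$ may hang off one cut vertex of $W$ whose node lies outside $T_S$, and these all sit in one tree component), but the property you actually need, $|N(V(G_i),W)|=1$ for each component $G_i$, does hold and your tree-attachment reasoning is the right way to see it; and your grafting argument for $S\subseteq W$ -- a step the paper treats as immediate from the definition of $T_S$ -- is legitimate diligence, though it is obtained more directly from the pruning construction of Corollary~\ref{coro:id}: a leaf block that is the unique remaining block containing some $s\in S$ can never be pruned, because $s$ is then a non-cut vertex of the current induced subgraph.
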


\begin{proof}
Let $v\in \cup_{B_j\in V(T_S)} B_j$. If $v\in S$ then $v\in I(S)$. If $v\not\in S$, consider a block $B_v$ containing $v$. Clearly, there is a maximal path $P_v=B_{j_1}z_1B_{j_2}z_2\ldots z_{k-1}B_{j_k}$ in $T_S$ containing $B_v$. Thus, $v\in\cup_{i=1}^{k} V(B_{j_i})$. By the maximality of $P_v$, $B_{j_1}$ and $B_{j_k}$ are leaves in $T_S$, and this implies that $B_{j_1}$ contains a vertex $u\in S$ and $B_{j_k}$ contains a vertex $w\in S$, $w\neq u$, such that $u$ and $w$ are not cut vertices in $G_S$. Then, by Lemma~\ref{lem:path}, there is a path $P$ in $G$ from $u$ to $w$ passing through $v$. Therefore, $v\in I(S)$, i.e., $\cup_{B_j\in V(T_S)} B_j \ \subseteq \  I(S)$.

Now, let $v\in I(S)$. If $v\in S$, then, from the definition of $T_S$, it is clear that $v\in\cup_{B_j\in V(T_S)} B_j$. Assume then $v\not\in S$ and $v\not\in\cup_{B_j\in V(T_S)} B_j$. Thus, $v$ is in a block $B_v$ of $T_G$ that is not in $T_S$, and this implies that there is a cut vertex $z\in V(B_v)$, $z\neq v$, that separates $v$ and $\cup_{B_j\in V(T_S)} B_j$. Since $S\subseteq\cup_{B_j\in V(T_S)} B_j$, $z$ separates $v$ and $S$, and therefore every path from some $u\in S$ to $v$ passes through $z$, i.e., it is impossible to find a path from a vertex $u\in S$ to another vertex $w\in S$ passing through $v$. This contradicts the assumption $v\in I(S)$. Hence, $v\in\cup_{B_j\in V(T_S)} B_j$ and $I(S)\subseteq\cup_{B_j\in V(T_S)} B_j$.
\end{proof}

\if 10


\begin{theorem}
Let $S\subseteq V$ and $v\in V\setminus S$. Then $v\in I(S)$ if and only if there are distinct $u,w\in S$ for which no vertex $z\neq v$ separates $v$ and $\{u,w\}$.
\end{theorem}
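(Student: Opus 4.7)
My plan is to prove both directions, with the forward implication being a short path-surgery argument, and the reverse direction handled by introducing an auxiliary graph $G^{*}$ and reducing the hypothesis to a block-containment claim in $G^{*}$.

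For the $(\Rightarrow)$ direction, I would assume $v\in I(S)$ and pick a path $P:u=x_0,x_1,\ldots,x_k=w$ with $u,w\in S$ distinct and $v=x_j$ for some $0<j<k$. Given any $z\neq v$, either $z$ does not lie on $P$, in which case $P$ itself survives in $G-z$ and connects $v$ to both endpoints, or $z=x_i$ for some $i\neq j$, in which case the subpath of $P$ on the side of $z$ containing $v$ stays intact in $G-z$ and reaches $w$ (if $i<j$) or $u$ (if $i>j$). Either way, $v$ is not separated from $\{u,w\}$, so the chosen pair witnesses the right-hand side.

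For the $(\Leftarrow)$ direction, suppose $u,w\in S$ are distinct with no $z\neq v$ separating $v$ from $\{u,w\}$ in $G$. Introduce the auxiliary graph $G^{*}=(V\cup\{x\},E\cup\{xu,xw\})$, where $x$ is a fresh vertex. The core claim is that $v$ and $x$ lie in a common block $B^{*}$ of $G^{*}$. By the standard characterization of blocks (two distinct vertices share a block iff no third vertex separates them in the graph), this amounts to showing that no $z\in V(G^{*})\setminus\{v,x\}$ disconnects $v$ from $x$ in $G^{*}-z$. Since the only neighbors of $x$ are $u$ and $w$, a case analysis on whether $z\in\{u,w\}$ or $z\notin\{u,w\}$ reduces this cleanly to the hypothesis about $G$; in both cases a path from $v$ to $x$ in $G^{*}-z$ corresponds exactly to a path from $v$ to $u$ or to $w$ in $G-z$.

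Once $v\in B^{*}$ is established, $B^{*}$ contains the four distinct vertices $u,v,w,x$ and is therefore $2$-connected, so the Fan Lemma (as used in Lemma~\ref{lem:path}) yields a $u$--$w$ path in $B^{*}$ passing through $v$. I would then observe that this path cannot use $x$ as an internal vertex: since the only neighbors of $x$ in $B^{*}$ are the endpoints $u$ and $w$, any path using $x$ internally would reduce to $u,x,w$, which does not visit $v$. Hence the path lies entirely in $G$, certifying $v\in I(S)$. The step I expect to be the main obstacle is the careful verification that ``$z$ separates $v$ from $x$ in $G^{*}$'' is equivalent to ``$z$ separates $v$ from $\{u,w\}$ in $G$'' when $z\in\{u,w\}$, since then $z$ is simultaneously one of the vertices defining the separation in $G$ and a neighbor of $x$ in $G^{*}$; the remaining cases are routine.
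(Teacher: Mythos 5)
Your argument is correct, and its interesting half (the ``if'' direction) is genuinely different from the paper's. The paper works directly with the block structure of $G$: it splits into four cases according to whether $u$, $v$, $w$ share a block, and in the mixed cases it walks along the path of the block--cut tree between the relevant blocks, applying the Fan Lemma inside individual blocks and concatenating the resulting subpaths. You instead attach an auxiliary vertex $x$ with $N(x)=\{u,w\}$, translate the hypothesis into ``no vertex of $V\setminus\{v\}$ separates $v$ from $x$ in $G^{*}$'' (your case analysis on $z\in\{u,w\}$ versus $z\notin\{u,w\}$ is exactly right, since for $z=u$ the hypothesis forces $v$ and $w$ to be in one component of $G-u$), invoke the standard fact that two distinct vertices lie in a common block iff no third vertex separates them, and then apply the Fan Lemma once in that single $2$-connected block, finally discarding $x$ because a path with endpoints $u,w$ that used $x$ internally could only be $u,x,w$. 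This buys a uniform, case-free argument with a single Fan Lemma application; what it costs is reliance on the block characterization of non-separability (itself usually proved via the block--cut tree or Menger), whereas the paper's construction is more self-contained given the tree machinery it has already set up for Lemma~\ref{lem:path} and Theorem~\ref{thm:i-s}. One small step you should make explicit: why $u,w\in B^{*}$. Since $v\notin\{u,w\}$, the vertices $v$ and $x$ are non-adjacent in $G^{*}$, so the block $B^{*}$ containing them both has at least three vertices and is therefore $2$-connected; hence $x$ has degree at least $2$ in $B^{*}$, and as its only neighbors in $G^{*}$ are $u$ and $w$, both belong to $B^{*}$. With that line added, your proof is complete.
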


\begin{proof}
Suppose first that $v\in I(S)$. Then, from the definition of the interval function $I$, there is a path $P$ with distinct endpoints $u,w\in S$ such that $v$ lies in $P$. Thus, the removal of any $z\not\in V(P)$ keeps $u,v,w$ in the same connected component of $G-z$ (the one that contains the path $P$). On the other hand, the removal of any $z\in V(P)$ keeps $u,v$ or $w,v$ in the same connected component of $G-z$. In any case, no vertex $z\neq v$ separates $v$ and $\{u,w\}$.

Conversely, suppose that there are distinct $u,w\in S$ for which no vertex $z\neq v$ separates $v$ and $\{u,w\}$.

\begin{itemize}
\setlength\itemsep{0em}
\item[--] If $u$, $v$, and $w$ belong to a same block $B_1$ then there is a path $P$ contained in $B_1$ from $u$ to $w$ passing through $v$ \ \ -- this is precisely the Fan Lemma (Proposition 9.5 in \cite{BM08}) applied to the case of $2$-connected graphs. Thus $v\in I(S)$.
\item[--] If $u$ and $w$ belong to a block $B_1$ and $v$ belongs to a block $B_v\neq B_1$ then the edges adjacent to $B_1$ in $T$ can be assumed to have labels distinct from $v$ (otherwise $v$ is a vertex of $B_1$ and we would fall back to the previous case). This implies that there is an edge in $T$ with a label $z\neq v$ whose removal disconnects $B_1$ from $B_v$, that is, there is a vertex $z\neq v$ in $V$ that separates $v$ and $\{u,w\}$. This is a contradiction, and thus this case cannot occur.
\item[--] If $u$ and $v$ belong to a block $B_1$ and $w$ belongs to a block $B_w\neq B_1$ (the case $w$ and $v$ in $B_1$ and $u$ in $B_u\neq B_1$ is similar) then the edges adjacent to $B_1$ in $T$ can be assumed to have labels distinct from $w$ (otherwise we would fall back to the first case again). In this situation we take the only path $P_T$ in $T$ between $B_1$ and $B_w$, say $P_T=B_1B_2\ldots B_k$ (where $B_k=B_w$) and construct from $P_T$ a path $P$ in $G$ as follows. Denote by $z_i$ the label of the edge $B_iB_{i+1}$, $1\leq i\leq k-1$. Let $P_1$ be a path contained in $B_1$ starting at $u$, passing through $v$ and ending at $z_1$ (if $v\neq z_1$, the Fan Lemma guarantees the existence of such a path). Let $P_i$, $2\leq i\leq k-1$, be a path contained in $B_i$ starting at $z_i$ and ending at $z_{i+1}$. Finally, let $P_k$ be a path contained in $B_k$ starting at $z_k$ and ending at $w$. Now, define $P$ as the concatenation $P_1P_2\ldots P_k$ (remove replicated vertices that appear consecutively). Since $P$ is a path from $u$ to $w$ passing through $v$, we conclude that $v\in I(S)$.
\item [--] Finally, if there are three distinct blocks $B_u$, $B_v$, and $B_w$ containing, respectively, $u$, $v$, and $w$, then there must be a path $P_T$ in $T$ from $B_u$ to $B_w$ passing through $B_v$, for otherwise there would be an edge $z\in E(T)$, $z\neq v$, whose removal disconnects $B_v$ from $B_u$ and $B_w$ (which implies the existence of a vertex $z\neq v$ that separates $v$ and $\{u,w\}$ in $G$). From the path $P_T$ we can construct a path $P$ in $G$ from $u$ to $w$ passing through $v$, using the same strategy as explained in the previous case. Thus $v\in I(S)$.
\end{itemize}
\vspace{-0.7cm}
\end{proof}

\begin{corollary}
{\sc Interval Determination} can be solved in $O(n(n+m))$ time for the all-path convexity.
\end{corollary}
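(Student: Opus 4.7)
The plan is to use Theorem~\ref{thm:i-s} as the structural tool and reduce {\sc Interval Determination} to computing the subtree $T_S$ explicitly. The case $|S|\leq 1$ is trivial ($I(S)=S$, computed in $O(n)$), so assume $|S|\geq 2$. Then Theorem~\ref{thm:i-s} gives $I(S)=\bigcup_{B_j\in V(T_S)} B_j$, so once $T_S$ is known, the answer is produced by a simple linear-time union of its block labels.

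The algorithm I would propose proceeds in three phases. First, compute the block-cut tree $T_G$ using a standard DFS-based procedure (Tarjan's algorithm) in $O(n+m)$. Second, extract from $T_G$ the minimal subtree $T^{(0)}$ spanning all blocks that meet $S$, by a single leaf-to-root pass in $O(n)$. Third, refine $T^{(0)}$ iteratively: while some leaf block $B$ of the current subtree $T^{(t)}$ has the property that every vertex of $S\cap V(B)$ is a cut vertex of $G^{(t)}=\bigcup_{B_j\in V(T^{(t)})} B_j$, delete $B$ (and the adjacent cut-vertex node of $T_G$ if it is rendered superfluous). When the loop terminates, the remaining subtree coincides with $T_S$, since it is maximal with the property that every leaf contains a non-cut vertex of $G^{(t)}$ lying in $S$; then output $\bigcup_{B_j\in V(T_S)} B_j$.

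For the complexity, the refinement loop removes at least one block per iteration, so it runs at most $O(n)$ times; each iteration must recompute the cut vertices of the current $G^{(t)}$, which costs $O(n+m)$ via standard biconnectivity. Hence the refinement costs $O(n(n+m))$ overall, matching the claimed bound. The main obstacle I expect is justifying the need for the inner recomputation: pruning a leaf block $B$ attached via cut vertex $z$ may demote $z$ to a non-cut vertex of $G^{(t+1)}$, which can cascade and make previously ineligible leaves eligible (or the converse). This is why a single-pass implementation is insufficient and the extra factor of $n$ arises. Correctness then reduces to two invariants: (i) no pruned block can belong to $T_S$, and (ii) at termination every leaf has an $S$-vertex that is a non-cut vertex of $G^{(t)}$, matching the definition of $T_S$ verbatim.
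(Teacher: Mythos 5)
Your proposal is correct in substance, but it follows a genuinely different route from the proof the paper attaches to this particular statement. The paper's $O(n(n+m))$ argument does not touch the block-cut tree: it rests on the separator characterization that $v\in I(S)$ if and only if there are distinct $u,w\in S$ such that no vertex $z\neq v$ separates $v$ and $\{u,w\}$, and the algorithm runs, for each $z\neq v$, one connected-components computation on $G-z$, collects the vertices of $S$ cut off from $v$, and accepts if at least two vertices of $S$ are never cut off; the factor $n$ is the number of choices of $z$. Your route instead goes through Theorem~\ref{thm:i-s} and computes $T_S$ by leaf pruning, which is exactly the strategy the paper uses for its stronger $O(n+m)$ bound (Corollary~\ref{coro:id}); relative to that proof you lose only efficiency, because you recompute the cut vertices of $G^{(t)}$ from scratch in every round. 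Two remarks. First, your claim that this recomputation (hence the extra factor $n$) is unavoidable is not right: a leaf block $B$ of the current subtree has exactly one vertex that is a cut vertex of $G^{(t)}$, namely its unique attachment node $z$ in the tree, so a leaf is prunable precisely when $S\cap V(B)\subseteq\{z\}$, and this can be tested and maintained in constant time per prune, which is how the linear-time bound is obtained. Second, your invariant (i) (no pruned block lies in $T_S$) is asserted rather than argued; it does hold, and the argument is short: if a prunable leaf $B$ of $T^{(t)}\supseteq T_S$ belonged to $T_S$, it would also be a leaf of $T_S$, and the vertex of $S$ witnessing the leaf condition of $T_S$ in $B$ would have to be the attachment vertex $z$ (prunability forces $S\cap V(B)\subseteq\{z\}$); but if $T_S$ has at least two blocks, $z$ lies in two blocks of $T_S$ and is therefore a cut vertex of $G_S$, contradicting the witness condition, while the case $V(T_S)=\{B\}$ cannot occur because $S\subseteq\bigcup_{B_j\in V(T_S)}V(B_j)=V(B)$ and $|S|\geq 2$ would give $S\cap V(B)\not\subseteq\{z\}$. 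With that line filled in, your argument is a valid (if deliberately slack) proof of the stated $O(n(n+m))$ bound, and it has the added merit of being the natural stepping stone to the paper's linear-time refinement.
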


\begin{proof}
To solve the problem, we can run the following algorithm. First, for all $z\neq v$, do: determine the connected components of $G-z$, locate the connected component $C_z$ of $G-z$ containing $v$, and compute $S_z = S\setminus C_z$. (Note that $z$ separates $v$ and $S_z$).

Having computed all the subsets $S_z$, for $z\neq v$, compute $S'=S\setminus (\cup_{z\neq v} S_z)$. If $S'$ contains at least two vertices $u$ and $w$ then the answer to the problem is ``yes'', since no $z\neq v$ separates $v$ and $\{u,w\}$; otherwise the answer is ``no''.

It is easy to see that computing $S_z$, for some $z\neq v$, takes $O(n+m)$ time. Thus the entire algorithm runs in $O(n(n+m))$ time.
\end{proof}


\fi

\begin{corollary}\label{coro:id}
{\sc Interval Determination} can be solved in $O(n+m)$ time for the all-path convexity.
\end{corollary}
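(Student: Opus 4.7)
The plan is to invoke Theorem~\ref{thm:i-s} and compute $I(S)$ by extracting the subtree $T_S$ from the block-cut tree $T_G$, then returning the union of the vertex sets of its blocks. First, I would build $T_G$ together with all vertex-block incidences in $O(n+m)$ time via the classical DFS-based biconnected components algorithm. The total size of this structure, namely $\sum_{B} |V(B)| + \sum_{B}|E(B)|$, is $O(n+m)$.

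Next, I would extract $T_S$ by iteratively peeling leaves from a shrinking subtree $T' \subseteq T_G$, initialized as $T_G$. The key observation that makes one local check suffice is the following: if $B$ is a leaf block of $T'$ adjacent to the cut vertex $z_B$ (its unique neighbor in $T'$), then the cut vertices of $G_{T'}$ lying in $B$ are exactly $\{z_B\}$, because every other cut vertex of $G$ in $B$ has already lost all of its other incident blocks in $T'$ and is therefore not a cut vertex of $G_{T'}$ anymore. Hence $B$ satisfies the defining condition on leaves of $T_S$ if and only if $S \cap (V(B) \setminus \{z_B\}) \neq \emptyset$, and $B$ should be pruned exactly when this fails.

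I would implement this with a queue-based leaf-peeling that maintains, for every node of $T_G$, its current degree in $T'$. Whenever a leaf block $B$ is pruned, the degree of $z_B$ is decremented; if $z_B$ thereby becomes a leaf of $T'$, it is also removed (since leaves of $T_S$ must be blocks), and its remaining block neighbor's degree is decremented, possibly enqueuing it. Each block and each cut vertex is enqueued at most once, and checking the condition for block $B$ costs $O(|V(B)|)$, so the whole peeling runs in $O(\sum_B |V(B)|) = O(n+m)$ time. Finally, iterating once more over the surviving blocks and collecting their vertices yields $\bigcup_{B_j \in V(T_S)} V(B_j) = I(S)$ by Theorem~\ref{thm:i-s}, again within $O(n+m)$ time.

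The main obstacle is bookkeeping the evolving cut-vertex status: a vertex that is a cut vertex of $G$ may cease to be a cut vertex of $G_{T'}$ as peeling proceeds, and one could fear that verifying the leaf condition requires a global recomputation after each pruning step. The observation above resolves this, since for any current leaf block $B$ the only relevant cut vertex of $G_{T'}$ in $B$ is the current parent $z_B$, so the check reduces to a purely local, amortized-$O(|V(B)|)$ test, which is what delivers the linear total running time.
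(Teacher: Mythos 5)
Your proposal is correct and follows essentially the same route as the paper: build the block-cut tree $T_G$ in $O(n+m)$ time, obtain $T_S$ by iteratively peeling leaf blocks that contain no vertex of $S$ other than their adjacent cut vertex (removing cut-vertex nodes as they become superfluous), and output the union of the surviving blocks, which equals $I(S)$ by Theorem~\ref{thm:i-s}. Your explicit justification that the only relevant cut vertex of the current induced subgraph inside a leaf block is its unique tree neighbor is a welcome elaboration of a point the paper's proof leaves implicit, but it does not change the argument.
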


\begin{proof}
Assume $|S|\geq 2$ (other cases are trivial). The block-cut tree $T_G$ of $G$ can be constructed in $O(n+m)$ time using well-known algorithmic techniques~\cite{HT73}. The determination of $T_S$ can be easily done in $O(n+m)$ time as follows: initialize $T_S\leftarrow T_G$ and repeat the following procedure until no longer possible: if $B_j$ is a leaf of the tree containing no vertices of $S$ or containing exactly one vertex of $S$ that is a cut vertex of $G_S$ then remove from the tree the vertex associated with $B_j$ and the cut vertex $z_i$ adjacent to $B_j$ in the tree. Each removal takes $O(|V(B_j)|)$ time, thus the construction of $T_S$ from $T_G$ takes $O(n)$ time. Finally, computing $\bigcup_{B_j\in V(T_S)} B_j$ takes $O(n)$ time. Hence, the theorem follows.
\end{proof}

In Figure 1, for $S=\{b,j,w\}$, we have that $I(S)=V(B_3)\cup V(B_4)\cup V(B_5)$.

\begin{lemma}\label{lem:i-s-convex}
Let $S\subseteq V$. Then, in the all-path convexity, $I(S)$ is always a convex set.
\end{lemma}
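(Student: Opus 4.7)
The plan is to combine Theorem \ref{thm:i-s}, which expresses $I(S)$ as the union of blocks in a specific subtree of the block-cut tree, with the structural characterization of convex sets in Theorem \ref{thm:convex}. Set $S'=I(S)$ and assume $|S|\geq 2$, so that $S'=\bigcup_{B_j\in V(T_S)}V(B_j)$ (the cases $|S|\leq 1$ give $I(S)=S$, which is trivially convex). If $S'=V$ there is nothing to prove; otherwise it suffices to check that $|N(V(G_i),S')|=1$ for every connected component $G_i$ of $G-S'$.

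The first step is to read off the structure of $G-S'$ from the block-cut tree $T_G$. Deleting the tree-vertices in $V(T_S)$ leaves a forest, whose components I call $T'_1,T'_2,\ldots$. Since $T_G$ is a tree, each $T'_\ell$ is joined to $T_S$ by a unique $T_G$-edge $e_\ell$, and since edges of $T_G$ always link a block to a cut vertex, $e_\ell$ has a well-defined cut-vertex endpoint $z_\ell\in V$. A short case check (depending on whether $z_\ell\in V(T_S)$ or $z_\ell\in V(T'_\ell)$) shows that $z_\ell$ always belongs to some block of $V(T_S)$, hence $z_\ell\in S'$.

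The second step is to show that $z_\ell$ is the only vertex of $\bigcup_{B\in V(T'_\ell)}V(B)$ that lies in $S'$. If a cut vertex $z\neq z_\ell$ belonged simultaneously to a block of $V(T'_\ell)$ and a block of $V(T_S)$, then $z$ would give a second $T_G$-path between $V(T_S)$ and $V(T'_\ell)$, contradicting that $T_G$ is a tree. Consequently the component $G_i$ arising from $T'_\ell$ equals $\bigl(\bigcup_{B\in V(T'_\ell)}V(B)\bigr)\setminus\{z_\ell\}$, and the neighbors in $G$ of its vertices lie inside blocks of $V(T'_\ell)$; among those, only $z_\ell$ is in $S'$. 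Because every block contains at least two vertices, $z_\ell$ really does have a neighbor in $G_i$, giving $N(V(G_i),S')=\{z_\ell\}$, so $|N(V(G_i),S')|=1$; Theorem \ref{thm:convex} then concludes the proof.

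The main obstacle I expect is the careful translation between vertices of the tree $T_G$ (which represent blocks or cut vertices of $G$) and vertices of $V$. Since a single cut vertex of $G$ may lie in many blocks, ruling out ``extra'' boundary vertices between $S'$ and $G-S'$ relies essentially on the tree property of $T_G$; once that bookkeeping is correct, the convexity of $I(S)$ follows mechanically from Theorem \ref{thm:convex}.
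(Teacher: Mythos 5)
Your proposal is correct and follows essentially the same route as the paper: apply Theorem~\ref{thm:i-s} to write $I(S)$ as the union of the blocks of $T_S$ and then verify the criterion of Theorem~\ref{thm:convex} by examining how the remaining blocks attach to $T_S$ in the block-cut tree; in fact your write-up supplies the bookkeeping that the paper's one-line justification (every vertex of $I(S)$ lying in a block outside $T_S$ is a cut vertex) leaves implicit. One small over-claim: the set $\bigl(\bigcup_{B\in V(T'_\ell)}V(B)\bigr)\setminus\{z_\ell\}$ need not be a single component of $G-I(S)$, since $z_\ell$ may lie in several blocks of $T'_\ell$ and its removal can split that set into several pieces. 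This does not hurt the argument: your step 2 shows $z_\ell$ is the only vertex of $I(S)$ adjacent to vertices lying in blocks of $T'_\ell$, so every component $G_i$ contained in that set still satisfies $N(V(G_i),I(S))=\{z_\ell\}$ (nonemptiness follows from connectivity of $G$ rather than from your ``each block has two vertices'' remark), and Theorem~\ref{thm:convex} applies exactly as you intend.
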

\begin{proof}
Assume $S\neq V$. By Theorem~\ref{thm:i-s}, $I(S)= \cup_{B_j\in V(T_S)} B_j$. Let $X= \cup_{B_j\notin V(T_S)} B_j$. In other words, $X$ is formed by all vertices belonging to blocks of $G$ that are {\em not} represented in $T_S$. It is clear that every vertex in $I(S)\cap X$ is a cut vertex of $G$. That is, the condition
\begin{center}
``$|N(V(G_i),I(S))|=1$ for every connected component $G_i$ of $G-I(S)$''
\end{center}
is valid. Hence, by Theorem~\ref{thm:convex}, $I(S)$ is a convex set.
\end{proof}

\begin{theorem}\label{thm:h-s}
In the all-path convexity, $H(S)=I(S)$ for every $S\subseteq V$.
\end{theorem}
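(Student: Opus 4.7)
The statement says that in the all-path convexity, applying the interval function once already yields the convex hull, i.e., the all-path convexity has the so-called Caratheodory-type property that $H(S) = I(S)$ directly. The plan is to derive this as an almost immediate corollary of the previous lemma.

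First I would recall that $H(S) = I^k(S)$ for the smallest $k$ such that $I^k(S) = I^{k+1}(S)$, and that monotonicity of $I$ together with convexity of a set $C$ (i.e., $I(C) = C$) implies $I(T) \subseteq C$ whenever $S \subseteq T \subseteq C$. Using these standard facts, the proof splits into two inclusions.

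For the inclusion $I(S) \subseteq H(S)$, I would note that $H(S)$ is a convex set containing $S$, and hence by monotonicity $I(S) \subseteq I(H(S)) = H(S)$. For the reverse inclusion $H(S) \subseteq I(S)$, I would invoke Lemma~\ref{lem:i-s-convex} directly: $I(S)$ is itself a convex set containing $S$, so the smallest convex set containing $S$, namely $H(S)$, must be a subset of $I(S)$.

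Combining both inclusions gives $H(S) = I(S)$. The only nontrivial ingredient is Lemma~\ref{lem:i-s-convex}, which has already been established; everything else follows from elementary properties of interval functions in convexity spaces. I do not expect any obstacle: once the block-cut-tree characterization of $I(S)$ from Theorem~\ref{thm:i-s} and the convexity of $I(S)$ from Lemma~\ref{lem:i-s-convex} are available, the theorem is a two-line consequence.
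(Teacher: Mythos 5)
Your proposal is correct and follows essentially the same route as the paper, whose proof of Theorem~\ref{thm:h-s} is simply an appeal to Lemma~\ref{lem:i-s-convex}: since $I(S)$ is convex and contains $S$, it must contain $H(S)$, while $I(S)\subseteq H(S)$ holds trivially. Your spelled-out two-inclusion argument is just a more explicit version of the paper's one-line proof.
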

\begin{proof}
Follows from Lemma~\ref{lem:i-s-convex}.
\end{proof}

\begin{corollary}\label{coro:gin}
In the all-path convexity, every graph $G$ satisfies $gin(G)\leq 1$ (i.e., is interval monotone).
\end{corollary}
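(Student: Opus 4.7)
The plan is to chain together the two results immediately preceding this corollary. Recall that by definition $gin(G) \leq 1$ means $I^{1}(S) = I^{2}(S)$ for every $S \subseteq V$, i.e., a single application of the interval function already produces a fixed point.

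First I would unwind the definitions. For any $S \subseteq V$, Theorem~\ref{thm:h-s} gives $H(S) = I(S)$, and in any convexity space the convex hull is convex, so $I(H(S)) = H(S)$. Substituting yields
\[
I^{2}(S) \;=\; I(I(S)) \;=\; I(H(S)) \;=\; H(S) \;=\; I(S) \;=\; I^{1}(S).
\]
Since this holds for every $S \subseteq V$, the minimum index $i$ such that $I^{i}(S) = I^{i+1}(S)$ for all $S$ is at most $1$, giving $gin(G) \leq 1$.

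Alternatively, one can appeal directly to Lemma~\ref{lem:i-s-convex}, which states that $I(S)$ is convex for every $S$; convexity of $I(S)$ is equivalent to $I(I(S)) = I(S)$, which is exactly the condition $I^{1}(S) = I^{2}(S)$.

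There is no real obstacle here: the corollary is an immediate formal consequence of Theorem~\ref{thm:h-s} (or of Lemma~\ref{lem:i-s-convex}), since the heavy lifting, namely showing that a single application of the interval operator already yields a convex set via the block-cut tree characterization, has been carried out in the previous results. The only minor point to verify is the trivial case $|S| \leq 1$ (where $I(S) = S$ is already convex), which is immediate.
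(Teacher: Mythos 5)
Your proposal is correct and follows the same route as the paper, which derives the corollary directly from Theorem~\ref{thm:h-s} (equivalently, from Lemma~\ref{lem:i-s-convex} showing $I(S)$ is convex). You have merely spelled out the one-line deduction $I^{2}(S)=I(H(S))=H(S)=I(S)$ that the paper leaves implicit.
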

\begin{proof}
Follows from the previous theorem.
\end{proof}

\begin{corollary}
{\sc Convex Hull Determination} can be solved in $O(n+m)$ time for the all-path convexity.
\end{corollary}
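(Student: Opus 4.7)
The plan is to combine Theorem~\ref{thm:h-s} with Corollary~\ref{coro:id} directly. Since Theorem~\ref{thm:h-s} establishes that $H(S)=I(S)$ for every $S\subseteq V$ under the all-path convexity, the problem of computing the convex hull of $S$ reduces exactly to the problem of computing the interval $I(S)$, with no additional iteration of the interval function required.

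Concretely, I would first invoke Theorem~\ref{thm:h-s} to replace the target $H(S)$ by $I(S)$, then appeal to Corollary~\ref{coro:id}, which asserts that $I(S)$ can be determined in $O(n+m)$ time. Composing these two facts yields the claimed $O(n+m)$ bound. If one wishes to be explicit about the algorithm, I would simply describe the three-step procedure: (i) build the block-cut tree $T_G$ in $O(n+m)$ time via the Hopcroft-Tarjan technique~\cite{HT73}; (ii) prune $T_G$ down to $T_S$ by repeatedly removing leaves that are blocks either disjoint from $S$ or meeting $S$ in a single vertex that is a cut vertex of $G_S$, in overall $O(n)$ time; (iii) output $\bigcup_{B_j\in V(T_S)} B_j$, also in $O(n)$ time.

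The only minor conceptual point worth noting, rather than an obstacle, is that for $|S|\leq 1$ the statement is trivial ($H(S)=S$), so these degenerate cases should be handled separately before invoking the theorem. Beyond that, there is nothing delicate: the heavy lifting has already been done in Theorem~\ref{thm:i-s}, Corollary~\ref{coro:id}, and Lemma~\ref{lem:i-s-convex}, and the present corollary is essentially a bookkeeping consequence.
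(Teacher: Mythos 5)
Your proposal is correct and follows exactly the paper's route: the paper also derives this corollary directly by combining Theorem~\ref{thm:h-s} ($H(S)=I(S)$) with the $O(n+m)$ bound of Corollary~\ref{coro:id}. The explicit restatement of the block-cut-tree algorithm and the handling of $|S|\leq 1$ are harmless additions but not needed beyond what those two results already give.
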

\begin{proof}
Follows from Corollary~\ref{coro:id} and Theorem~\ref{thm:h-s}.
\end{proof}

\begin{corollary}
{\sc Geodetic Iteration Number} can be solved in $O(1)$ time for the all-path convexity.
\end{corollary}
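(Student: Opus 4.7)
The plan is to exploit Corollary~\ref{coro:gin}, which already tells us $gin(G)\in\{0,1\}$ for every graph $G$ in the all-path convexity. So the entire task reduces to deciding, in constant time, which of the two possible values applies. I would start by noting that $gin(G)=0$ means $I(S)=S$ for every $S\subseteq V$, i.e., every subset of $V$ is convex.

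Next I would characterize exactly when this happens. By Theorem~\ref{thm:convex}, a proper subset $S$ is convex iff every connected component $G_i$ of $G-S$ satisfies $|N(V(G_i),S)|=1$. Setting $S=V\setminus\{v\}$ immediately forces $\deg(v)\leq 1$ for every $v\in V$, and since $G$ is connected this restricts us to $G=K_1$ or $G=K_2$, equivalently $n\leq 2$. Conversely, for $n\leq 2$ it is straightforward (inspection of all subsets) that every subset is convex, so $gin(G)=0$.

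For $n\geq 3$, I would exhibit a non-convex set explicitly to confirm $gin(G)=1$. Since $G$ is connected with $n\geq 3$, some vertex $v$ has degree at least $2$; pick two distinct neighbors $u,w$ of $v$. Then $u\text{-}v\text{-}w$ is a path with endpoints in $S=\{u,w\}$, so $v\in I(S)\setminus S$, hence $S$ is not convex and $gin(G)>0$, forcing $gin(G)=1$ by Corollary~\ref{coro:gin}.

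The algorithm is then immediate: read $n$, return $0$ if $n\leq 2$ and $1$ otherwise, which is $O(1)$ time. There is essentially no obstacle here; the only subtlety is justifying that the two trivial cases $K_1,K_2$ really do exhaust all graphs with $gin(G)=0$, but this drops out of Theorem~\ref{thm:convex} applied to $S=V\setminus\{v\}$ as sketched above.
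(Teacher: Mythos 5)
Your proposal is correct, and it is actually more complete than the paper's own justification, which consists of the single line ``Follows from Corollary~\ref{coro:gin}.'' The paper implicitly treats the problem as settled once $gin(G)\leq 1$ is known, but strictly speaking the problem asks for the exact value of $gin(G)$, so one still has to decide between $0$ and $1$. You close that gap: using Theorem~\ref{thm:convex} with $S=V\setminus\{v\}$ you show that $gin(G)=0$ forces every vertex to have degree at most $1$, hence $G\in\{K_1,K_2\}$, and conversely these two graphs have every subset convex; for $n\geq 3$ a vertex of degree at least $2$ yields the non-convex pair $\{u,w\}$, so $gin(G)=1$ by Corollary~\ref{coro:gin}. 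The resulting algorithm (return $0$ if $n\leq 2$, else $1$) is genuinely $O(1)$. So your route rests on the same foundation as the paper (Corollary~\ref{coro:gin}), but adds the case analysis needed to make the constant-time determination of the exact value explicit; the paper's version buys brevity, yours buys a fully specified algorithm and a characterization of the graphs with $gin(G)=0$.
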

\begin{proof}
Follows from Corollary~\ref{coro:gin}.
\end{proof}

Now we focus on the {\sc Convexity Number} problem. For an end block $B_j$ of $G$, let $|V(B_j)|=b_j$. Define $b(G)=\min\{b_j\mid B_j \ \mbox{is an end block of} \ G\}$.

\begin{theorem}\label{thm:c-g}
In the all path convexity, for any graph $G$ it holds that:

\[ c(G) = \left\{ \begin{array}{ll}
         1, & \mbox{if $|V|=2$ or $G$ is $2$-connected};\\
         n-b(G)+1, & \mbox{otherwise}.\end{array} \right. \]

\end{theorem}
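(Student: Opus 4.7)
The plan is to split along the two cases of the formula and, within the cut-vertex case, to prove matching upper and lower bounds using the characterization in Theorem~\ref{thm:convex}.

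\textbf{Trivial cases.} If $|V|=2$ then $G=K_2$, whose only proper subsets are $\emptyset$ and two singletons, each of which is trivially convex (since no path has both endpoints equal to the same vertex, $I(\{v\})=\{v\}$), giving $c(G)=1$. If $G$ is $2$-connected with $|V|\geq 3$, I would suppose for contradiction that a convex set $S$ satisfies $2\leq |S|\leq n-1$. By Theorem~\ref{thm:convex}, every component $G_i$ of $G-S$ has a unique $S$-neighbor $z_i$; since $|S|\geq 2$, removing $z_i$ disconnects $G_i$ from $S\setminus\{z_i\}\neq\emptyset$, making $z_i$ a cut vertex, contradicting $2$-connectivity. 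Singletons remain convex, so $c(G)=1$.

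\textbf{Lower bound in the cut-vertex case.} For the inequality $c(G)\geq n-b(G)+1$, I would exhibit a convex set of that size: let $B$ be an end block of $G$ with $|V(B)|=b(G)$, let $z$ be its unique cut vertex, and put $S=V\setminus(V(B)\setminus\{z\})$. Then $G-S=V(B)\setminus\{z\}$ is connected (since $B$ is either a cut edge or $2$-connected), and the vertices of $V(B)\setminus\{z\}$ lie in no other block of $G$, so their only neighbor inside $S$ is $z$. Theorem~\ref{thm:convex} certifies $S$ is convex, and $|S|=n-b(G)+1$; the construction is genuinely proper since $G$ has at least two end blocks, which forces $n\geq 2b(G)-1$ and thus $|S|\geq 2$ and $S\neq V$.

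\textbf{Upper bound in the cut-vertex case.} Now take any proper convex $S$ with $|S|\geq 2$ (the maximum beats $1$ by the construction above), fix a component $G_i$ of $G-S$, and let $\{z_i\}=N(V(G_i),S)$. The structural step is to show $V(G_i)$ equals the vertex set of one component of $G-z_i$: any vertex $v$ in that component outside $V(G_i)$ would yield a $G-z_i$ path from $v$ to $V(G_i)$ whose last non-$V(G_i)$ vertex is either in $S$ (forcing a second neighbor of $V(G_i)$ in $S$, contradicting uniqueness of $z_i$) or in $V\setminus S$ (forcing $v$ into $V(G_i)$, a contradiction). Hence $V(G_i)\cup\{z_i\}$ is the vertex union of a subtree $T_i$ of the block-cut tree $T_G$ hanging off $z_i$. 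If $T_i$ contains more than one block, it has a block-leaf $B\neq$ the block adjacent to $z_i$; all cut vertices of $G$ in $B$ lie inside $T_i$, so $B$ has exactly one cut-vertex neighbor in $T_G$, making $B$ an end block of $G$ with $V(B)\subseteq V(G_i)$ and $|V(G_i)|\geq b(G)$. Otherwise $T_i$ is a single block $B_0$, necessarily an end block of $G$ with $V(G_i)=V(B_0)\setminus\{z_i\}$ and $|V(G_i)|=|V(B_0)|-1\geq b(G)-1$. Either way $n-|S|\geq|V(G_i)|\geq b(G)-1$, i.e.\ $|S|\leq n-b(G)+1$.

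\textbf{Main obstacle.} The crux is the structural claim that $V(G_i)$ is a full connected component of $G-z_i$ rather than merely a subset of one; without this, one cannot locate an end block of $G$ sitting inside $V(G_i)\cup\{z_i\}$, and the bound $|V(G_i)|\geq b(G)-1$ fails. All remaining steps are direct applications of Theorem~\ref{thm:convex} and routine block-cut tree bookkeeping.
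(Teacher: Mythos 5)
Your proof is correct, and its skeleton matches the paper's: the same case split and the same extremal witness $S=V\setminus(V(B)\setminus\{z\})$ for a minimum end block $B$ with cut vertex $z$, certified convex via Theorem~\ref{thm:convex}. You diverge in two places, both to your advantage. For the $2$-connected case the paper invokes the Fan Lemma (every third vertex lies on some $u$--$w$ path), whereas you derive the contradiction from Theorem~\ref{thm:convex} by showing that the unique attachment vertex $z_i$ of a component of $G-S$ would be a cut vertex; both work, and yours is more elementary. More substantially, for the upper bound $c(G)\leq n-b(G)+1$ the paper essentially asserts that the witness is maximum, while you actually prove it: you show each component $G_i$ of $G-S$ is a \emph{full} component of $G-z_i$, hence corresponds to a branch of the block-cut tree at $z_i$, and that this branch contains an end block of $G$ (possibly the block containing $z_i$ itself), giving $|V(G_i)|\geq b(G)-1$ and thus $|S|\leq n-b(G)+1$. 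This is precisely the step the paper leaves implicit, and your block-cut-tree bookkeeping is sound, including the observations that cut-vertex nodes cannot be leaves of the branch and that a leaf block other than the one containing $z_i$ retains its degree from $T_G$. One cosmetic slip: in the structural claim, when the last vertex outside $V(G_i)$ on the chosen path lies in $V\setminus S$, the immediate contradiction is that this vertex itself must belong to $G_i$ (it is adjacent to $G_i$ and outside $S$), rather than that $v\in V(G_i)$; the conclusion is unaffected.
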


\begin{proof}
If $|V|=2$ then the theorem is trivially true. If $G$ is $2$-connected then, by the Fan Lemma, for each pair of vertices $u,w\in V, w\neq u$, we have that every $v\notin\{u,w\}$ lies in a path from $u$ to $w$. Hence, for every $S$ with $2\leq |S|\leq n-1$, $S$ is not convex. This implies that $c(G)=1$.

Suppose now that $G$ is not $2$-connected. Note that any $S\subseteq V$ consisting of the union of all vertex sets of all blocks of $G$ but one end block, say $B_j$, is convex, because the only cut vertex $z$ belonging to $V(B_j)$ separates all the vertices of $V\setminus V(B_j)$ from $V(B_j)\setminus z$. (Note that $z\in S$.) Thus the maximum convex set in $G$ is obtained by removing from $G$ all the vertices in an end block $B_j$ with minimum size, except the cut vertex $z\in V(B_j)$.
\end{proof}

\begin{corollary}
{\sc Convexity Number} can be solved in $O(n+m)$ time for the all-path convexity.
\end{corollary}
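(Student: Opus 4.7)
The plan is to translate the closed-form expression for $c(G)$ given by Theorem~\ref{thm:c-g} directly into an algorithm, and then argue that each step fits in the linear budget. First I would check whether $|V|=2$, which is trivial in $O(1)$ time, or whether $G$ is $2$-connected; the latter test can be performed in $O(n+m)$ time via the classical depth-first-search procedure of~\cite{HT73} for detecting cut vertices. If either condition holds, the algorithm outputs $c(G)=1$.

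Otherwise, I would compute the block-cut tree $T_G$ of $G$ in $O(n+m)$ time, again using the algorithm of~\cite{HT73}. From $T_G$, the end blocks are precisely the leaves associated with blocks, and these can be enumerated in $O(n)$ time. For each end block $B_j$, the value $b_j=|V(B_j)|$ is known as a byproduct of the block decomposition, so $b(G)=\min\{b_j\mid B_j \mbox{ is an end block}\}$ can be computed by a single scan over the leaves of $T_G$ in $O(n)$ time. Finally, the algorithm outputs $n-b(G)+1$.

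The correctness follows immediately from Theorem~\ref{thm:c-g}, and the total running time is dominated by the block decomposition, namely $O(n+m)$. There is no real obstacle here: the only subtlety is ensuring that the $2$-connectivity test and the block-cut tree construction are carried out by a single linear-time traversal (which the Hopcroft--Tarjan procedure provides), so that the overall complexity remains $O(n+m)$ rather than degrading to something like $O(n(n+m))$.
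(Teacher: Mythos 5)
Your proposal is correct and follows essentially the same route as the paper, which simply invokes Theorem~\ref{thm:c-g} together with the $O(n+m)$ computability of the block decomposition via Hopcroft--Tarjan. You merely spell out the same argument in more algorithmic detail (enumerating end blocks and taking the minimum block size), which is fine.
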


\begin{proof}
Follows from Theorem~\ref{thm:c-g} and the fact that the block decomposition of $G$ can be obtained in $O(n+m)$ time.
\end{proof}

Now we finally consider the problems {\sc Interval Number} and {\sc Hull Number}. Let $eb(G)$ be the number of end blocks of $G$.

\begin{theorem}\label{thm:i-g}
In the all path convexity, for any graph $G$ it holds that:

\[ i(G) = \left\{ \begin{array}{ll}
         1, & \mbox{if $G$ is trivial};\\
         2, & \mbox{if $|V|=2$ or $G$ is $2$-connected};\\
         eb(G), & \mbox{otherwise}.\end{array} \right. \]

\end{theorem}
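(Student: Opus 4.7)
The plan is to dispose of the first two cases quickly and then prove the general bound $i(G)=eb(G)$ via two matching inequalities obtained from Theorem~\ref{thm:i-s}.

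For the trivial case $|V|=1$, take $S=V$; for $|V|=2$, observe $I(\{v\})=\{v\}$ so we need $|S|\geq 2$, and $S=V$ works. For the $2$-connected case with $|V|\geq 3$, the Fan Lemma ensures that for any distinct $u,w\in V$ every other vertex lies on a $u$-$w$ path, hence $I(\{u,w\})=V$; since no singleton can be an interval set when $|V|\geq 2$, $i(G)=2$.

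For the main case, in which $G$ has at least one cut vertex, I would prove $i(G)\leq eb(G)$ constructively. Observe first that if $B_j$ is an end block with (unique) cut vertex $z$, then any $v\in V(B_j)\setminus\{z\}$ fails to be a cut vertex of $G$: indeed $B_j-v$ is connected (either $B_j$ is $2$-connected, or $B_j$ is a cut edge and $v$ is a leaf), and the rest of $G$ is attached at $z$. Let $S$ contain exactly one such $v_j$ for each end block $B_j$; then $|S|=eb(G)$. I would then verify that $T_S=T_G$: since each leaf of $T_G$ (which is an end block) contains a vertex of $S$ that is not a cut vertex of $G$, no leaf can be peeled off, so the maximal subtree equals $T_G$. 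Theorem~\ref{thm:i-s} now yields $I(S)=\bigcup_{B_j\in V(T_G)}V(B_j)=V$.

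For the reverse inequality $i(G)\geq eb(G)$, take any interval set $S$, so $I(S)=V$. By Theorem~\ref{thm:i-s}, $V=\bigcup_{B_j\in V(T_S)}V(B_j)$, which forces $T_S=T_G$ (otherwise some block's vertices would be missed). By the defining property of $T_S$, each leaf of $T_S$ — that is, each end block $B_j$ — must contain a vertex $v_j\in S$ that is not a cut vertex of $G_S=G$. Since distinct blocks of $G$ intersect only in cut vertices, the non-cut vertices $v_j$ lie in pairwise disjoint blocks, hence are pairwise distinct, giving $|S|\geq eb(G)$.

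The one subtle step, and the place where I expect the argument to be most delicate, is verifying that $T_S=T_G$ in both directions: one must be careful that the self-referential definition (``leaves contain a vertex of $S$ that is not a cut vertex of $G_S$'') behaves correctly. The trick is that once $T_S=T_G$ is conjectured, $G_S=G$, so ``cut vertex of $G_S$'' reduces to ``cut vertex of $G$,'' and the observation about non-cut vertices of end blocks (both for the construction and for the lower bound) seals the argument.
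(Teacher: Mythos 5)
Your proof is correct, and in its essentials it follows the paper's route: the same block-cut-tree framework and the identical extremal construction (one non-cut vertex per end block) for the upper bound. The packaging differs in two minor ways. For the upper bound you combine $T_S=T_G$ with Theorem~\ref{thm:i-s}, whereas the paper re-runs the path construction via Lemma~\ref{lem:path}; your version is slightly cleaner, since Theorem~\ref{thm:i-s} already encapsulates that work. For the lower bound the paper argues in contrapositive/pigeonhole form (if $|S'|<eb(G)$, some end block contains no vertex of $S'$ outside its cut vertex, and its remaining vertices cannot lie on any path between two vertices of $S'$), while you argue directly: coverage $V=\bigcup_{B_j\in V(T_S)}V(B_j)$ forces $T_S=T_G$, and the leaf condition in the definition of $T_S$ then plants a distinct non-cut vertex of $S$ in every end block. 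These are contrapositive packagings of the same fact, so this is a repackaging rather than a genuinely different argument. Your version does lean on two small points stated without proof: that any interval set here has $|S|\geq 2$ (needed to invoke Theorem~\ref{thm:i-s}; immediate since $I(\{v\})=\{v\}$ and $|V|\geq 3$ in this case), and that full coverage forces $T_S=T_G$ (a block $B\notin V(T_S)$ can meet $\bigcup_{B_j\in V(T_S)}V(B_j)$ in at most one vertex, because two blocks of $T_S$ containing two distinct cut vertices of $B$ would have their tree path in $T_G$ pass through $B$, contradicting connectivity of $T_S$; since $|V(B)|\geq 2$, some vertex would then be missed). Both are easily supplied, so there is no gap.
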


\begin{proof}
If $|V|=1$ or $|V|=2$ then the theorem is trivially true. If $G$ is $2$-connected, by the Fan Lemma any pair $u,w\in V$, $w\neq u$, is an interval set, and thus $i(G)=2$ in this case.

Finally, if $G$ is not $2$-connected, consider $S\subseteq V$ such that $S\cap V(B_j)=\{v_j\}$ for each end block $B_j$ of $G$, where $v_j$ is not a cut vertex of $G$. Note that $|S|=eb(G)$. The definition of $S$ implies that $T_S=T_G$, and thus every vertex $v\in V$ is in a block $B_v$ of $G$ belonging to a maximal path $B_{j_1}z_1B_{j_2}z_2\ldots z_{k-1}B_{j_k}$ in $T_S$ such that $B_{j_1}$ and $B_{j_k}$ are end blocks of $G_S=G$ containing, respectively, vertices $u,w\in S$, $w\neq u$, that are not cut vertices in $G_S=G$. By Lemma~\ref{lem:path}, there is a path $P$ in $G$ from $u$ to $w$ passing through $v$. In other words, $S$ is an interval set. To conclude the proof, if a set $S'\subseteq V$ is such that $|S'|<eb(G)$ then there is at least one end block $B_j$ in $G$ such that $V(B_j)\setminus\{z_j\}$ contains no vertices of $S'$, where $z_j$ is the cut vertex of $G$ belonging to $V(B_j)$. Hence, no vertex in $V(B_j)\setminus\{z_j\}$ can lie in a path starting and ending at distinct vertices of $S'$, i.e., $S'$ cannot be an interval set. Thus $S$ is minimum.
\end{proof}

\begin{corollary}\label{coro:i=h}
In the all path convexity, for any graph $G$ it holds that $h(G)=i(G)$.
\end{corollary}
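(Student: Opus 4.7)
The plan is to leverage Theorem~\ref{thm:h-s}, which tells us that in the all-path convexity the interval operator already produces convex sets, so $H(S)=I(S)$ for every $S\subseteq V$. Once we have this identity, the corollary reduces to a two-line argument: combine the generic inequality $h(G)\leq i(G)$ (valid in any convexity space) with the fact that hull sets now coincide with interval sets.

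More concretely, I would first recall the standard direction $h(G)\leq i(G)$: any interval set $S$ satisfies $I(S)=V$, hence $H(S)\supseteq I(S)=V$, so $S$ is a hull set. Thus a smallest interval set is a hull set, giving $h(G)\leq i(G)$. For the reverse inequality, let $S$ be a smallest hull set, so $|S|=h(G)$ and $H(S)=V$. By Theorem~\ref{thm:h-s}, $I(S)=H(S)=V$, so $S$ is an interval set, and therefore $i(G)\leq |S|=h(G)$. Combining the two inequalities yields $h(G)=i(G)$.

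I do not expect any real obstacle here; the whole difficulty has already been absorbed into Theorem~\ref{thm:h-s} (and ultimately into Lemma~\ref{lem:i-s-convex}, which shows that $I(S)$ is itself convex in the all-path convexity). The proof of the corollary is essentially a bookkeeping step that turns that structural fact into the numerical equality between the two parameters.
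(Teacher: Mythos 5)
Your proof is correct and follows the same route as the paper, which also derives the corollary directly from Theorem~\ref{thm:h-s}; you simply spell out the bookkeeping (a hull set $S$ has $I(S)=H(S)=V$, hence is an interval set, giving $i(G)\leq h(G)$, while $h(G)\leq i(G)$ holds generically) that the paper leaves implicit.
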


\begin{proof}
Follows from Theorem~\ref{thm:h-s}.
\end{proof}

\begin{corollary}
{\sc Interval Number} and {\sc Hull Number} can be solved in $O(n+m)$ time for the all-path convexity.
\end{corollary}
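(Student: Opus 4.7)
The plan is to leverage the structural characterizations already established: Theorem~\ref{thm:i-g} reduces $i(G)$ to a simple quantity depending only on the block structure of $G$, and Corollary~\ref{coro:i=h} asserts $h(G)=i(G)$, so it suffices to compute $i(G)$ once and return it for both problems. So the algorithm I would propose is: (i) check if $|V|\leq 2$ and return the value given by Theorem~\ref{thm:i-g} in those trivial cases; (ii) otherwise, compute the block decomposition of $G$ to detect whether $G$ is $2$-connected and, if not, to count the end blocks $eb(G)$; (iii) output $i(G)$ (and $h(G)=i(G)$) according to the three cases of Theorem~\ref{thm:i-g}.

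For the complexity analysis, the dominant step is computing the block decomposition, which by the classical Hopcroft--Tarjan algorithm~\cite{HT73} runs in $O(n+m)$ time and produces the block-cut tree $T_G$. From $T_G$, deciding whether $G$ is $2$-connected amounts to checking whether $T_G$ has a single block vertex, and counting $eb(G)$ amounts to counting the leaves of $T_G$ (each leaf corresponds to an end block of $G$), both of which take $O(n)$ additional time. The cases $|V|=1$ or $|V|=2$ are detected in $O(1)$ time after reading the input. Summing these contributions gives an overall running time of $O(n+m)$.

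There is essentially no obstacle here: the mathematical content is carried entirely by Theorem~\ref{thm:i-g} and Corollary~\ref{coro:i=h}, and the algorithmic content reduces to the well-known linear-time block decomposition. Thus the corollary follows.
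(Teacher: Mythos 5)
Your proposal is correct and matches the paper's intended argument: the paper leaves this corollary without an explicit proof precisely because it follows from Theorem~\ref{thm:i-g}, Corollary~\ref{coro:i=h}, and the $O(n+m)$ block decomposition of~\cite{HT73}, which is exactly the reasoning you spell out. Your additional observations (2-connectivity read off from $T_G$ having a single block vertex, $eb(G)$ as the number of leaves of $T_G$) are accurate implementation details consistent with the paper's earlier complexity corollaries.
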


\section{Concluding remarks}

In this work we showed that the problems {\sc Convex Set}, {\sc Interval Determination}, {\sc Convex Hull Determination}, {\sc Convexity Number}, {\sc Interval Number}, and {\sc Hull Number} can all be solved in $O(n+m)$ time for the all-path convexity, whereas {\sc Geodetic Iteration Number} can be solved in $O(1)$ time (all graphs are interval monotone with respect to the all-path convexity). The arguments used to solve such problems are based on the block decomposition and the block-cut tree of the graph.


\end{document}